\def\Grad{\mathop{\rm grad}}
\def\Div{\mathop{\rm div}}
\def\bar{\overline}
\def\tilde{\widetilde}
\def\inter{\text{int}}
\title[Inverse obstacle problem and the BC method]{Solving an inverse obstacle problem for the wave equation by using the boundary control method}
\author{Lauri Oksanen}
\address{University of Helsinki, P.O. Box 68 FI-00014}
\email{lauri.oksanen@helsinki.fi}
\date{\today}
\subjclass{Primary: 35R30}
\keywords{inverse problems, boundary control, wave equation, obstacle detection, scattering}
\begin{document}
\begin{abstract}
We introduced in \cite{Oksanen2011a} a method to locate discontinuities of a wave speed in dimension two from acoustic boundary measuments modelled by the hyperbolic Neumann-to-Dirichlet operator. Here we extend the method for sound hard obstacles in arbitrary dimension.
We present numerical experiments %in dimension two 
with simulated noisy data suggesting 
that the method is robust against measurement noise.
\end{abstract}
\maketitle

\section{Introduction}

Nondestructive obstacle reconstruction through wave propagation motivates a number of mathematical problems with several applications such as medical and seismic imaging.
There is a large body of literature concerning obstacle detection 
using time harmonic waves, and we refer the reader to the review articles \cite{Colton2000, Potthast2006} and to the monograph \cite{Kirsch2008}.
Recently there has been also interest in 
reconstruction methods from acoustic measurements in the time domain
\cite{Burkard2009, Chen2010, Lines2005, Luke2006}.
In this paper we present a numerical method of the latter type. 
We allow the background to be anisotropic and non-homogeneous but confine ourselves to the case of non-stationary acoustic waves 
and the scattering from sound-hard obstacles.

Let $M$ be a compact smooth manifold with smooth boundary $\p M$
and let $g$ be a smooth Riemannian metric tensor on $M$.
Let $\Sigma \subset M^\inter$ be a compact set with nonempty interior and smooth boundary,
and let $\mu \in C^\infty(M)$ be strictly positive.
We consider the following wave equation on $M$,
\begin{align}
\label{eq_wave}
&\p_t^2 u(t,x) - \Delta_{g, \mu} u(t, x) = 0, & (t,x) \in (0,\infty) \times (M \setminus \Sigma),
\\\nonumber& \p_{\nu, \mu} u(t,x) = f(t,x), & (t,x) \in (0,\infty) \times \p M,
\\\nonumber& \p_{\nu, \mu} u(t,x) = 0, & (t,x) \in (0,\infty) \times \p \Sigma,
\\\nonumber& u|_{t=0}(x) = 0, \quad \p_t u|_{t=0}(x)=0,  & x \in M \setminus \Sigma,
\end{align}
where $\Delta_{g, \mu}$ is the weighted Laplace-Beltrami operator
and $\p_{\nu, \mu}$ is the normal derivative corresponding to $\Delta_{g, \mu}$.
That is,
if we let $(g^{jk}(x))_{j,k=1}^n$ and $|g(x)|$ denote the inverse and determinant of $g(x)$ in local coordinates,
then we have
\begin{align*}
\Delta_{g, \mu} u
&= \mu^{-1} \Div (\mu \Grad u)
\\
&= \sum_{j,k=1}^n \mu(x)^{-1}|g(x)|^{-\frac 12}\frac {\p}{\p x^j} 
\ll( \mu(x)|g(x)|^{\frac 12}g^{jk}(x)\frac {\p u}{\p x^k} \rr),
\\
\p_{\nu, \mu} u 
&= \mu (\Grad u, \nu)_{T M \times T^* M}
= \sum_{j,k=1}^n \mu(x) \nu_k(x) g^{jk}(x) \frac{\p u}{\p x^j},
\end{align*}
where $\nu = (\nu_1, \dots, \nu_n)$ is the exterior co-normal vector of $\p M$
normalized with respect to $g$, that is, $\sum_{j,k=1}^m g^{jk}\nu_j\nu_k=1$.

Let us denote the solution of (\ref{eq_wave}) by $u^f(t,x) = u(t,x)$. 
For $T > 0$ and an open $\Gamma \subset \p M$
we define the operator 
\begin{align*}
%\label{eq_DtoN}
\Lambda_{T, \Gamma} : f \mapsto u^f|_{(0,T) \times \Gamma},
\quad 
f \in C_0^\infty((0,T) \times \Gamma).
\end{align*}
The Neumann-to-Dirichlet operator $\Lambda_{T, \Gamma}$ models 
boundary measurements with acoustic sources 
and receivers on $\Gamma$. 
Let us assume that the metric tensor $g$ and the weight function $\mu$ are known but $\Sigma$ is unknown.
We consider a method to locate $\Sigma$ from the measurements $\Lambda_{T, \Gamma}$. 

Let us point out that if $M \subset \R^n$,
$g = c(x)^{-2} dx^2$ and $\mu(x) = c(x)^{n-2}$
where $c \in C^\infty(M)$ is strictly positive, then
$\Delta_{g, \mu} = c(x)^2 \Delta$, 
where $\Delta$ is the Euclidean Laplacian. Thus the isotropic wave equation,
\begin{align*}
\p_t^2 u - c(x)^2 \Delta u = 0,
\end{align*}
is covered by the theory.
The more general equation (\ref{eq_wave})
allows for an anisotropic wave speed to be modelled.

\subsection{Statement of the results}
Notice that the operator $\Delta_{g, \mu}$ with the domain $H^2(M) \cap H^1_0(M)$ is self-adjoint on
the space $L^2(M; \mu dV_g)$, where $dV_g$ is the Riemannian volume measure of $(M, g)$,
that is, $\mu dV_g = \mu |g|^{1/2} dx$ in local coordinates.
We call $\mu dV_g$ the measure corresponding to $\Delta_{g, \mu}$
and denote it also by $V$.

We define for a function $\tau : \p M \to \R$ the {\em domain of influence} with and without the obstacle,
\begin{align*} 
M_\Sigma(\tau) &:= \{x \in M \setminus \Sigma;\ \text{there is $y \in \p M$ such that $d_\Sigma(x,y) \le \tau(y)$}\},
\\
M(\tau) &:= \{x \in M;\ \text{there is $y \in \p M$ such that $d(x,y) \le \tau(y)$}\},
\end{align*} 
where $d_\Sigma$ is the Riemannian distance function of $(M \setminus \Sigma, g)$
and $d$ is that of $(M, g)$.
As $(M,g)$ is known, we can compute the shape of the domain of influence $M(\tau)$
for any $\tau : \p M \to \R$.
Our main theorem is the following:
\begin{theorem}
\label{thm_main}
Let $T > 0$ and let $\Gamma \subset \p M$ be open.
For a function $\tau$ in 
\begin{equation*}
C_{T}(\Gamma) := \{ \tau : \p M \to \R;\ 
\tau|_{\bar \Gamma} \in C(\bar \Gamma),\ 0 \le \tau \le T,\ \tau|_{\p M \setminus \bar \Gamma} = 0 \},
\end{equation*}
the volume $V(M_\Sigma(\tau))$
can be computed from $\Lambda_{2T, \Gamma}$ by solving a sequence of 
linear equations on $L^2((0,T) \times \Gamma)$.
Moreover, 
\begin{equation}
\label{eq_detection}
M(\tau)^\inter \cap \Sigma^\inter \ne \emptyset
\quad \text{if and only if} \quad
V(M_\Sigma(\tau)) < V(M(\tau)).
\end{equation}
\end{theorem}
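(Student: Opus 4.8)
To prove \eqref{eq_detection}\ is the second part; the first part is the boundary control method. Abbreviate $L^2(M \setminus \Sigma) := L^2(M \setminus \Sigma; V)$ and $u^f(T) := u^f(T,\cdot)$, and write $\mathbf 1$ for the constant function $1$ on $M \setminus \Sigma$. The first assertion rests on three ingredients, all computable from $\Lambda_{2T,\Gamma}$ together with the known data $(M, g, \mu)$. (i) \emph{A Blagoveshchenskii type identity.} For $f, h \in C_0^\infty((0,T) \times \Gamma)$ the function $J_{f,h}(t,s) := \langle u^f(t), u^h(s) \rangle_{L^2(M \setminus \Sigma)}$ solves a one-dimensional wave equation $(\p_t^2 - \p_s^2) J_{f,h} = K_{f,h}$ with vanishing Cauchy data on $\{t = 0\}$, where Green's formula for $\Delta_{g,\mu}$ gives
\begin{equation*}
K_{f,h}(t,s) = \int_{\Gamma} \bigl( f(t,y)\, (\Lambda_{2T,\Gamma} h)(s,y) - (\Lambda_{2T,\Gamma} f)(t,y)\, h(s,y) \bigr)\, dS_g(y);
\end{equation*}
the Neumann condition on $\p \Sigma$ kills the interface contribution, and the boundary terms on $\p M$ enter only through $f, h$ and through $u^f, u^h$ restricted to $\Gamma$. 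Solving this over the domain of dependence of $(T,T)$ recovers $\langle u^f(T), u^h(T) \rangle_{L^2(M \setminus \Sigma)}$ from $\Lambda_{2T,\Gamma}$ alone. (ii) \emph{A constant source identity.} Integrating the wave equation over $M \setminus \Sigma$ and using the divergence theorem with the vanishing Neumann datum on $\p \Sigma$ gives $\frac{d^2}{dt^2} \langle \mathbf 1, u^f(t) \rangle = \int_{\p M} f(t,y)\, dS_g(y)$, whence
\begin{equation*}
\langle \mathbf 1, u^f(T) \rangle_{L^2(M \setminus \Sigma)} = \int_0^T (T-t) \int_{\Gamma} f(t,y)\, dS_g(y)\, dt,
\end{equation*}
which depends on $f$ and $(M, g)$ only. (iii) \emph{Approximate controllability.} For $\tau \in C_T(\Gamma)$ put $\mathcal F_\tau := \{ f \in C_0^\infty((0,T) \times \Gamma) ;\ f(t,y) = 0 \text{ for } t \le T - \tau(y) \}$; finite speed of propagation forces $u^f(T)$ with $f \in \mathcal F_\tau$ to be supported in $M_\Sigma(\tau)$, and Tataru's unique continuation theorem gives that $\{ u^f(T) ;\ f \in \mathcal F_\tau \}$ is dense in $L^2(M_\Sigma(\tau))$.

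Granting (i)--(iii), let $P_\tau$ be the orthogonal projection of $L^2(M \setminus \Sigma)$ onto $\overline{\{ u^f(T) ;\ f \in \mathcal F_\tau \}} = L^2(M_\Sigma(\tau))$; then $P_\tau \mathbf 1 = \mathbf 1_{M_\Sigma(\tau)}$ and $V(M_\Sigma(\tau)) = \langle \mathbf 1, P_\tau \mathbf 1 \rangle$. I would choose an increasing sequence $\mathcal F_\tau^{(1)} \subset \mathcal F_\tau^{(2)} \subset \cdots$ of finite-dimensional subspaces of $\mathcal F_\tau$ with dense union and, for each $j$, solve in $\mathcal F_\tau^{(j)}$ the normal equations
\begin{equation*}
\langle u^f(T), u^h(T) \rangle = \langle \mathbf 1, u^h(T) \rangle \qquad \text{for all } h \in \mathcal F_\tau^{(j)},
\end{equation*}
a linear equation on a finite-dimensional subspace of $L^2((0,T) \times \Gamma)$ whose Gram matrix is furnished by (i) and whose right-hand side by (ii). Its solution $f_j$ satisfies $u^{f_j}(T) = P_j \mathbf 1$, where $P_j$ is the orthogonal projection onto $\{ u^f(T) ;\ f \in \mathcal F_\tau^{(j)} \}$; since these subspaces increase with dense union in $L^2(M_\Sigma(\tau))$, the projections converge strongly, so $u^{f_j}(T) \to P_\tau \mathbf 1$ in $L^2(M \setminus \Sigma)$ and $V(M_\Sigma(\tau)) = \lim_{j \to \infty} \langle \mathbf 1, u^{f_j}(T) \rangle$, the right side being evaluated by (ii). This proves the first assertion.

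For the equivalence: every path in $M \setminus \Sigma$ is a path in $M$, so $d_\Sigma \ge d$ and therefore $M_\Sigma(\tau) \subset M(\tau) \setminus \Sigma$; in particular $M_\Sigma(\tau)$ and $M(\tau)^\inter \cap \Sigma^\inter$ are disjoint subsets of $M(\tau)$. If $M(\tau)^\inter \cap \Sigma^\inter \ne \emptyset$ this is a nonempty open set, so $V(M(\tau)^\inter \cap \Sigma^\inter) > 0$ because $V$ has a strictly positive density, and hence $V(M(\tau)) \ge V(M_\Sigma(\tau)) + V(M(\tau)^\inter \cap \Sigma^\inter) > V(M_\Sigma(\tau))$. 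Conversely assume $M(\tau)^\inter \cap \Sigma^\inter = \emptyset$, and put $\Phi(x) := \min_{y \in \bar\Gamma}(d(x,y) - \tau(y))$, so that $M(\tau) = \{\Phi \le 0\}$. Given $x \in M(\tau)^\inter \setminus \Sigma$ with a minimizer $y \in \bar\Gamma$, a minimizing $M$-geodesic $\gamma$ from $x$ to $y$ satisfies $\Phi(\gamma(t)) \le d(\gamma(t), y) - \tau(y) = \Phi(x) - t < 0$ along its open portion, so $\gamma$ avoids $\Sigma^\inter$; hence $\gamma$ stays in $\overline{M \setminus \Sigma}$ and $d_\Sigma(x,y) \le d(x,y) \le \tau(y)$, i.e.\ $x \in M_\Sigma(\tau)$. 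Thus $M(\tau)^\inter \setminus \Sigma \subset M_\Sigma(\tau) \subset M(\tau)$; since $V(M(\tau)^\inter \cap \Sigma) \le V(\p \Sigma) = 0$ and $V(M(\tau) \setminus M(\tau)^\inter) = 0$ --- a standard fact about domains of influence --- the two outer sets both have volume $V(M(\tau))$, so $V(M_\Sigma(\tau)) = V(M(\tau))$.

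The step I expect to be the main obstacle is the sharp controllability in (iii): one must upgrade the trivial inclusion $\mathrm{supp}\, u^f(T) \subset M_\Sigma(\tau)$ to density of $\{ u^f(T) ;\ f \in \mathcal F_\tau \}$ in the \emph{whole} of $L^2(M_\Sigma(\tau))$, on the manifold $M \setminus \Sigma$ carrying mixed Neumann conditions and with controls confined to $\Gamma \subset \p M$; this rests on Tataru's unique continuation theorem together with a duality argument, and needs care near $\p \Sigma$ and near grazing rays. A lesser, purely geometric difficulty is the last paragraph --- that minimizing geodesics reaching points of $M(\tau)^\inter$ cannot be driven into $\Sigma$, and that $\p M(\tau)$ is $V$-null.
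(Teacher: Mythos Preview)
Your proposal is correct and shares the paper's essential architecture: the same three ingredients (Blagove\v{s}\v{c}enski\u{\i} identity, the constant-source identity $(u^f(T),1)=(f,b)$ with $b(t,x)=T-t$, and approximate controllability via Tataru's unique continuation) are assembled to reduce the computation of $V(M_\Sigma(\tau))$ to linear equations determined by $\Lambda_{2T,\Gamma}$. The differences are in the packaging rather than the substance. First, you regularize by Galerkin projection onto an increasing sequence of finite-dimensional source spaces and invoke strong convergence of the associated orthogonal projections; the paper instead develops Tikhonov regularization, writing $f_\alpha=(K_\tau+\alpha)^{-1}b$ and proving an abstract lemma that $W_\tau f_\alpha\to P_\tau 1$ as $\alpha\to 0$, arriving at the closed formula $V(M_\Sigma(\tau))=\lim_{\alpha\to 0^+}\langle (K_\tau+\alpha)^{-1}b,\,b\rangle$. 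The paper explicitly notes projection as a legitimate alternative, so your choice is entirely admissible; Tikhonov has the advantage of yielding a single one-parameter family of well-posed equations rather than a choice of discretization. Second, for the equivalence \eqref{eq_detection} you argue the contrapositive of the right-to-left implication, showing that if $M(\tau)^{\mathrm{int}}\cap\Sigma^{\mathrm{int}}=\emptyset$ then any minimizing $M$-path from $x\in M(\tau)^{\mathrm{int}}\setminus\Sigma$ to its minimizer $y\in\bar\Gamma$ stays in $M(\tau)^{\mathrm{int}}$ and hence outside $\Sigma^{\mathrm{int}}$, forcing $M(\tau)^{\mathrm{int}}\setminus\Sigma\subset M_\Sigma(\tau)$ and equality of volumes. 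The paper instead argues directly: from $V(M_\Sigma(\tau))<V(M(\tau))$ and the nullity of $\partial M(\tau)$ it extracts a point $x\in M(\tau)^{\mathrm{int}}\setminus M_\Sigma(\tau)$, and any short $M$-path to $\partial M$ from $x$ must meet $\Sigma$ at a point lying in $M(\tau)^{\mathrm{int}}$. Both routes use the same two nontrivial facts you flag at the end---the density statement (proved in the paper's appendix by reduction to constant profiles) and that $\partial M(\tau)$ is $V$-null---and both are complete given those inputs.
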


Theorem \ref{thm_main} allows us to probe the obstacle
with the known domains of influence $M(\tau)$, $\tau \in C_{T}(\Gamma)$.
We will illustrate this probing method in Section \ref{sec_computations}
via numerical experiments in the two dimensional case. 

In Section \ref{sec_proof_of_the_th} we give a proof of Theorem \ref{thm_main} that 
is based on ideas from the boundary control method.
By using the boundary control method,
a smooth wave speed can be fully reconstructed from the Neumann-to-Dirichlet operator.
This uniqueness result is by Belishev \cite{Belishev1987} in the isotropic case 
and by Belishev and Kurylev \cite{Belishev1992} in the anisotropic case. 
We refer to the monograph \cite{Katchalov2001} and to the review article \cite{Belishev2007} for further details on the boundary control method.
The boundary control method depends on Tataru's hyperbolic unique continuation result \cite{Tataru1995}, whence it is expected to have 
only logarithmic type stability.
Also our result depends on \cite{Tataru1995}, however, 
we overcome the ill-posedness of the reconstruction problem by 
regularizing it carefully. 
The regularization stategy is a modification of that in \cite{Bingham2008},
and the iterative time-reversal control method introduced there can be adapted to give an efficient implementation of our method.

%Some of the results in this paper were announced in the conference proceedings paper
%[TODO REF].

\section{Proof of the main theorem}
\label{sec_proof_of_the_th}

We begin by showing that the volumes $V(M_\Sigma(\tau))$, $\tau \in C_{T}(\Gamma)$,
can be computed from $\Lambda_{2T, \Gamma}$ by solving a sequence of 
linear equations on $L^2((0,T) \times \p M)$.
Our proof relies on general results from regularization theory
and it can also be adapted to simplify the arguments in \cite{Oksanen2011}.
We define the operator
\def\Op1{\mathbbm 1}
\begin{align*}
&K := J \Lambda_{2T, \Gamma} \Theta_{2T} - R \Lambda_{T, \Gamma} R J \Theta_{2T},
\end{align*}
where $\Theta_{2T}$ is the extension by zero from $(0, T)$ to $(0, 2T)$,
$R$ is the time reversal on $(0, T)$, that is $Rf(t) := f(T - t)$, and
\begin{align*}
%&J : L^2(0, 2T) \to L^2(0, T), \quad
&Jf(t) := \frac{1}{2} \int_t^{2T - t} f(s) ds,
\quad f \in L^2(0, 2T),\ t \in (0, T).
\end{align*}
We recall $K$ is a compact operator on $L^2((0, T) \times \Gamma)$
since, see \cite{Tataru1998},
\begin{align*}
\Lambda_{T, \Gamma} : L^2((0, T) \times \Gamma) \to H^{2/3}((0,T) \times \Gamma).
\end{align*}

Let $f \in C_0^\infty((0,T) \times \Gamma)$
and let $\phi \in C^\infty(M \setminus \Sigma)$.
Moreover, let $t \in (0, \infty)$ and integrate by parts
%see e.g. \cite[Prop. 2.2.3]{Taylor1996},
\begin{align}
\label{the_integration_by_parts}
&\p_t^2 (u^f(t), \phi)_{L^2(M \setminus \Sigma; dV)}
= 
(\Delta_{g, \mu} u^f(t), \phi)_{L^2(M \setminus \Sigma; dV)} 
%\\&\quad= 
%=
%( \mu^{-1} \Div (\mu \Grad u^f(t)), \phi)_{L^2(\M; \mu dV_g)} 
%\\\nonumber&\quad= 
%- \int_\M (\mu \Grad u^f(t), \phi)_{T\M \times C^\infty(\M)}\ dV_g 
%\\\nonumber&\quad\quad\quad+ 
%\int_{\p M} (\mu \Grad u^f(t), \nu)_{T\M \times T^*\M} \phi\ dS_g 
\\\nonumber&\quad= 
- (\Grad u^f(t), \Grad \phi)_{L^2(M \setminus \Sigma; dV)}
+ (\p_{\nu, \mu} u^f(t), \phi)_{L^2(\p M; dS_g)},
\end{align}
where $dS_g$ denotes 
the Riemannian surface measure on $(\p M, g)$.
Notice that the boundary term on $\p \Sigma$ vanish as $u^f$ satisfies the homogeneous Neumann boundary condition there.

In particular, for $f, h \in C_0^\infty((0,T) \times \Gamma)$,
$t \in (0, T)$ and $s \in (0, 2T)$,
\begin{align*}
&(\p_t^2 - \p_s^2) (u^f(t), u^h(s))_{L^2(M \setminus \Sigma; dV)}
\\&\quad= 
(f(t), \Lambda_{2T, \Gamma} h(s))_{L^2(\p M; dS_g)} 
- (\Lambda_{T, \Gamma} f(t), h(s))_{L^2(\p M; dS_g)}.
\end{align*}
By solving this wave equation with vanishing initial conditions at $t = 0$ 
and noticing that $\Lambda_{T, \Gamma}^* = R \Lambda_{T, \Gamma} R$, 
we get the Blagove{\v{s}}{\v{c}}enski{\u\i}'s identity
\begin{align}
\label{eq_Blago}
(u^f(T), u^h(T))_{L^2(M \setminus \Sigma; dV)}
&= (f, K h)_{L^2((0,T) \times \Gamma; dt \otimes dS_g)},
\end{align}
that holds for all $f, h \in L^2((0, T) \times \Gamma)$
by continuity of $K$ and density of smooth functions in $L^2$.
The identity (\ref{eq_Blago}) originates from \cite{Blagovevsvcenskiui1966}.

Moreover, by letting $\phi = 1$ identically in (\ref{the_integration_by_parts}), we get
\begin{align}
\label{eq_diff_for_1}
&\p_t^2 (u^f(t), 1)_{L^2(M \setminus \Sigma; dV)}
= (\p_{\nu, \mu} u^f(t), 1)_{L^2(\p M; dS_g)}.
\end{align}
Notice that this identity does not hold if $u^f$ satisfies 
the homogeneous Dirichlet boundary condition on $\p \Sigma$,
instead of the Neumann one. 
This is why our method does not extend 
to detection of sound soft obstacles in a straightforward manner. 
We get the indentity
\begin{align}
\label{eq_Blago_1}
(u^f(T), 1)_{L^2(M \setminus \Sigma; dV)}
= (f, b)_{L^2((0,T) \times \Gamma; dt \otimes dS_g)},
\end{align}
where $b(t, x) = T - t$,
by solving the ordinary differential equation (\ref{eq_diff_for_1}) with vanishing initial conditions at $t = 0$.

Let $\tau \in C_T(\Gamma)$ and let us define the set
\begin{align*}
S_\tau := \{(t, x) \in [0,T] \times \bar \Gamma;\ t \in [T - \tau(x), T] \}.
\end{align*}
We define the operator
\begin{align*}
W_\tau f := u^f(T), \quad
W_\tau : L^2(S_\tau) \to L^2(M \setminus \Sigma).
\end{align*}
It follows from \cite{Lasiecka1991} that $W_\tau$ is compact. 
Moreover, we may consider a restriction of $K$,
\begin{align*}
K_\tau f = K f|_{S_\tau}, \quad K_\tau : L^2(S_\tau) \to L^2(S_\tau).
\end{align*}
Then the equations (\ref{eq_Blago}) and (\ref{eq_Blago_1}) yield
that on $L^2(S_\tau)$ 
\begin{align}
\label{from_boundary_to_interior}
W_\tau^* W_\tau = K_\tau, \quad W_\tau^* 1 = b.
\end{align}

Let us now consider the control equation,
\begin{align}
\label{eq_control}
W_\tau f = 1, \quad \text{for $f \in L^2(S_\tau)$}.
\end{align}
We have $\supp(W_\tau f) \subset M_\Sigma(\tau)$
since the wave equation (\ref{eq_wave}) has finite speed of propagation.
Moreover, it can be shown using Tataru's unique continuation \cite{Tataru1995}
that the inclusion 
\begin{align}
\label{density}
\{W_\tau f;\ f \in L^2(S_\tau)\} \subset L^2(M_\Sigma(\tau)), 
\end{align}
is dense, see the appendix below. %\cite{Oksanen2011}.
In particular, if there is a least squares solution $f_0$ to (\ref{eq_control})
then $W_\tau f_0 = 1_{M_\Sigma(\tau)}$.
However, as $W_\tau$ is compact, the range of $W_\tau$ 
is a proper dense subset of $L^2(M_\Sigma(\tau))$
and (\ref{eq_control}) may fail to have a least squares solution.
Nonetheless, it is instructive to consider first the case 
where (\ref{eq_control}) has a least squares solution.
Then the least squares solution of minimal norm $f_0$ is given by the pseudoinverse,
see e.g. \cite[Th. 2.6]{Engl1996},
\begin{align*}
f_0 = W_\tau^\dagger 1 = (W_\tau^* W_\tau)^\dagger W_\tau^* 1 
= K_\tau^\dagger b,
\end{align*}
and we can compute the volume $V(M_\Sigma(\tau))$ from the boundary data 
$\Lambda_{2T, \Lambda}$ by the formula
\begin{align*}
V(M_\Sigma(\tau)) &= 
(1_{M_\Sigma(\tau)}, 1)_{L^2(M \setminus \Sigma; dV)}
= (W_\tau W_\tau^\dagger 1, 1)_{L^2(M \setminus \Sigma; dV)}
\\&= (K_\tau^\dagger b, b)_{L^2(S_\tau; dt \otimes dS_g)}.
\end{align*}

%We refer to \cite{Engl1996} for the definition and properties
%of the pseudoinverse operator in general. 
%It is well-known, see e.g. \cite[Prop. 2.4.]{Engl1996}, that the pseudoinverse $W_\tau^\dagger$ is a bounded operator 
%if and only if $W_\tau$ has closed range $R(W_\tau)$. 
%
%However, $R(W_\tau)$ can be non-closed even if 
%$\Gamma \subset \p M$ and $\tau = T$ identically and $T$ is arbitrarily large. 
%In fact, if $T > \max_{x \in M \setminus \Sigma} d_\Sigma(x, \p M)$,
%then $R(W_\tau)$ is dense in $L^2(M)$ by Tataru's unique continuation \cite{Tataru1995}.
%Thus $R(W_\tau)$ is closed if and only if $W_\tau$ is surjective.
%Moreover, surjectivity of 
%\begin{align}
%\label{control_map}
%f \mapsto (u^f(T), \p_t u(T)) : L^2((0,T) \times \p M) 
%  \to L^2(M \setminus \Sigma) \times H^{-1}(M \setminus \Sigma) 
%\end{align}
%can be characterized geometrically \cite{Bardos1992, Burq1997}.
%In particular, if $\Sigma = \emptyset$ and $(M, g)$ is convex then 
%(\ref{control_map}) is surjective if and only if $(M, g)$ is non-trapping.

The standard technique to remedy the nonexistence of a least squares solution to 
a linear equation is to use a regularization method.
As $W_\tau$ is compact and we have the information (\ref{from_boundary_to_interior})
at our disposal, there are several ways to regularize that are
available to us. For example, we could use a regularization by projection 
\cite[Section 3.3]{Engl1996}
or a regularization based on a spectral approximation of the inverse \cite[Th. 4.1]{Engl1996}.
Here we will consider only the classical Tikhonov regularization,
\begin{align}
\label{tikhonov}
f_\alpha := (W_\tau^* W_\tau + \alpha)^{-1} W_\tau^* 1 = (K_\tau + \alpha)^{-1} b,
\quad \alpha > 0.
\end{align}

We have the following abstract lemma.

\begin{lemma}
Suppose that $X$ and $Y$ are Hilbert spaces.
Let $y \in Y$ and let $A : X \to Y$ be a bounded linear operator 
with the range $R(A)$. 
Then $A x_\alpha \to Py$ as $\alpha \to 0$, 
where $x_\alpha = (A^* A + \alpha)^{-1} A^* y$, $\alpha > 0$,
and $P : Y \to \overline{R(A)}$ is the orthogonal projection.
\end{lemma}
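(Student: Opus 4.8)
The plan is to exploit the spectral (or functional) calculus for the bounded non-negative self-adjoint operator $A^*A$ on $X$, together with the polar-type identity $A(A^*A + \alpha)^{-1}A^* = (AA^* + \alpha)^{-1}AA^*$. First I would record this identity: for $\alpha > 0$ both $(A^*A+\alpha)^{-1}$ and $(AA^*+\alpha)^{-1}$ are bounded with norm at most $1/\alpha$, and a short algebraic manipulation (multiply $A(A^*A+\alpha) = (AA^*+\alpha)A$ on both sides by the appropriate resolvents) gives $A(A^*A+\alpha)^{-1}A^* y = (AA^*+\alpha)^{-1}AA^* y$. Hence it suffices to show $(AA^*+\alpha)^{-1}AA^* z \to Pz$ as $\alpha \to 0$ for every $z \in Y$, where here $z = y$; equivalently, writing $B := AA^*$, that $(B+\alpha)^{-1}B \to P$ strongly, with $P$ the orthogonal projection onto $\overline{R(A)} = \overline{R(B)}$.

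Next I would establish this last strong-convergence statement using the spectral theorem for the bounded self-adjoint operator $B \ge 0$ on $Y$. Writing $B = \int_0^{\|B\|} \lambda\, dE_\lambda$, the function $r_\alpha(\lambda) := \lambda/(\lambda+\alpha)$ satisfies $0 \le r_\alpha \le 1$ and $r_\alpha(\lambda) \to 1$ for every $\lambda > 0$ while $r_\alpha(0) = 0$. Therefore $r_\alpha(B) \to \Op1 - E_{\{0\}}$ strongly by dominated convergence against the spectral measure $d(E_\lambda z, z)$. It remains only to identify $\Op1 - E_{\{0\}}$, the projection onto $(\ker B)^\perp$, with $P$, the projection onto $\overline{R(B)}$: this is the standard fact that for a bounded self-adjoint operator $\ker B = (\overline{R(B)})^\perp$, hence $(\ker B)^\perp = \overline{R(B)}$. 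Combining, $Ax_\alpha = r_\alpha(B)y \to (\Op1 - E_{\{0\}})y = Py$.

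The only genuinely delicate point is the bookkeeping around unbounded-looking resolvents when $0$ is in the spectrum of $A^*A$: one must be careful that $(A^*A+\alpha)^{-1}$ is genuinely bounded for each fixed $\alpha>0$ (it is, since $A^*A+\alpha \ge \alpha$), so that $x_\alpha$ is well defined, while making no claim of uniform boundedness in $\alpha$ — indeed $\|x_\alpha\|$ may blow up, which is exactly why one only gets convergence of the image $Ax_\alpha$ and not of $x_\alpha$ itself. Everything else is routine: the algebraic resolvent identity, the elementary inequality $0 \le r_\alpha \le 1$ needed to apply dominated convergence, and the textbook kernel–range duality for self-adjoint operators. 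No compactness of $A$ is needed, which is why the lemma is stated in this abstract generality and then applied with $A = W_\tau$ (whose compactness is used elsewhere, e.g. to guarantee $K_\tau$ is compact, but not here).
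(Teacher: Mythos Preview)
Your argument is correct, but it is a genuinely different proof from the paper's. The paper avoids the spectral theorem entirely: it invokes the variational characterization of $x_\alpha$ as the unique minimizer of $\|Ax-y\|^2 + \alpha\|x\|^2$, splits $\|Ax-y\|^2 = \|Ax-Py\|^2 + \|(1-P)y\|^2$ by Pythagoras, and for given $\epsilon>0$ picks $x^\epsilon\in X$ with $\|Ax^\epsilon - Py\|^2<\epsilon$ (possible since $Py\in\overline{R(A)}$); comparing the Tikhonov functional at $x_\alpha$ and $x^\epsilon$ then yields $\|Ax_\alpha - Py\|^2 \le \epsilon + \alpha\|x^\epsilon\|^2$, which is $<2\epsilon$ once $\alpha$ is small. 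Your route via the intertwining identity $A(A^*A+\alpha)^{-1}A^* = (AA^*+\alpha)^{-1}AA^*$ and dominated convergence for the filter $r_\alpha(\lambda)=\lambda/(\lambda+\alpha)$ against the spectral measure of $B=AA^*$ is equally valid and perhaps more conceptual: it makes transparent that the same conclusion holds for any bounded filter family $r_\alpha$ with $r_\alpha(\lambda)\to 1_{(0,\infty)}(\lambda)$ pointwise, and your identification $\overline{R(AA^*)}=\overline{R(A)}$ via $\ker(AA^*)=\ker(A^*)$ is clean. The paper's approach buys elementariness (no spectral calculus, only the minimization property and orthogonal decomposition), while yours buys a unified view of filter-based regularization and immediate generalizations.
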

\begin{proof}
Notice that for all $x \in X$
\begin{align*}
\norm{A x- y}^2 = \norm{A x - P y}^2 + \norm{(1 - P)y}^2.
\end{align*}
By \cite[Th. 5.1]{Engl1996} we know that $x_\alpha$ is the unique minimizer of 
\begin{align*}
\norm{A x - y}^2 + \alpha \norm{x}.
\end{align*}
Let $\epsilon > 0$ and let $x^\epsilon \in X$ satisfy
$\norm{A x^\epsilon - Py}^2 < \epsilon$.
Then
\begin{align*}
\norm{A x_\alpha - P y}^2 &= \norm{A x_\alpha- y}^2 - \norm{(1 - P)y}^2
\\&\le \norm{A x_\alpha - y}^2 + \alpha \norm{x_\alpha} - \norm{(1 - P)y}^2
\\&\le \norm{A x^\epsilon - y}^2 + \alpha \norm{x^\epsilon} - \norm{(1 - P)y}^2
\\&= \norm{A x^\epsilon - Py}^2 + \alpha \norm{x^\epsilon}
< \epsilon + \alpha \norm{x^\epsilon}
\le 2 \epsilon,
\end{align*}
for $\alpha \le \epsilon / \norm{x^\epsilon}$.
\end{proof}

By the density (\ref{density}) we have that $\overline{R(W_\tau)} = L^2(M_\Sigma(\tau))$.
Thus the above lemma implies that 
$W f_\alpha \to 1_{M_\Sigma(\tau)}$ in $L^2(M \setminus \Sigma)$ as
$\alpha$ tends to zero.
In particular, we may compute the volume $V(M_\Sigma(\tau))$ from the boundary data 
$\Lambda_{2T, \Lambda}$ by the formula
\begin{align}
\label{eq_reconstruction_vol}
V(M_\Sigma(\tau)) = \lim_{\alpha \to 0+} 
((K_\tau + \alpha)^{-1} b, b)_{L^2(S_\tau; dt \otimes dS_g)}.
\end{align}

\begin{lemma}
\label{lem_obstacle_from_volumes}
Let $T > 0$, $\Gamma \subset \p M$ be open and let $\tau \in C_T(\Gamma)$.
Then 
\begin{align*}
M(\tau)^\inter \cap \Sigma^\inter \ne \emptyset
\quad \text{if and only if} \quad
V(M_\Sigma(\tau)) < V(M(\tau)).
\end{align*}
\end{lemma}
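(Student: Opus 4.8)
The plan is to compare the two domains of influence $M_\Sigma(\tau)$ and $M(\tau)$ directly, using the fact that $M(\tau)$ is computable from the known metric while $M_\Sigma(\tau)$ sees the obstacle. The first observation is that $M_\Sigma(\tau) \subset M(\tau) \setminus \Sigma$ always: indeed, the obstacle can only lengthen distances, so $d_\Sigma(x,y) \ge d(x,y)$ for every $x \in M \setminus \Sigma$ and $y \in \p M$, hence $d_\Sigma(x,y) \le \tau(y)$ forces $d(x,y) \le \tau(y)$. Combined with $M_\Sigma(\tau) \subset M \setminus \Sigma$ this gives the inclusion, and since $V(\Sigma^\inter \cap M(\tau)) > 0$ exactly when $M(\tau)^\inter \cap \Sigma^\inter \ne \emptyset$ (as $\Sigma$ has nonempty interior and $V$ has a strictly positive density $\mu|g|^{1/2}$), the inequality $V(M_\Sigma(\tau)) < V(M(\tau))$ will follow from that intersection being nonempty once we also control the part of $M(\tau) \setminus \Sigma$ that $M_\Sigma(\tau)$ might miss.

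So the two directions split as follows. For the ``only if'' direction, suppose $M(\tau)^\inter \cap \Sigma^\inter \ne \emptyset$. Then $V(M(\tau)) = V(M(\tau) \setminus \Sigma) + V(M(\tau) \cap \Sigma^\inter)$ with the second term strictly positive, so it suffices to show $V(M_\Sigma(\tau)) \le V(M(\tau) \setminus \Sigma)$, which is immediate from the inclusion above. For the ``if'' direction I would prove the contrapositive: if $M(\tau)^\inter \cap \Sigma^\inter = \emptyset$, then $\Sigma$ (up to a set of measure zero, namely its boundary) does not meet $M(\tau)$, so $V(M(\tau)) = V(M(\tau) \setminus \Sigma)$, and the claim reduces to showing $M(\tau) \setminus \Sigma \subset \overline{M_\Sigma(\tau)}$ up to measure zero, i.e. that when the obstacle is disjoint from $M(\tau)$ it does not actually obstruct any of the relevant geodesics. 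The key point here is that for $x \in M(\tau)^\inter$ with $x \notin \Sigma$, a minimizing geodesic from $x$ to the nearest boundary point realizing $d(x,y) \le \tau(y)$ can be taken to avoid $\Sigma$ — but in general a single minimizing geodesic need not miss $\Sigma$ even if $\Sigma \cap M(\tau) = \emptyset$, since the geodesic from $x$ to $y$ could leave $M(\tau)$.

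That last point is the main obstacle, and it is where the structure of $\tau \in C_T(\Gamma)$ and the definition of $M(\tau)$ matter. The fix is to argue with the domain of influence characterized via the function $\tau$ rather than via a single distance: if $d(x,y) < \tau(y)$ strictly for some $y$, one can perturb $y$ along $\p M$ and correspondingly shorten, staying inside a neighborhood; more robustly, one uses that $M(\tau)^\inter = \{x : d(x,y) < \tau(y) \text{ for some } y\}$ and that the set where strict inequality holds with the whole minimizing geodesic contained in $M(\tau)$ is dense. I would establish: (i) $\{x \in M \setminus \Sigma : \exists y,\ d(x,y) < \tau(y),\ \gamma_{x,y} \cap \Sigma = \emptyset\} \subset M_\Sigma(\tau)$ (since such a $\gamma_{x,y}$ is also admissible in $M \setminus \Sigma$, giving $d_\Sigma(x,y) \le \text{length}(\gamma_{x,y}) = d(x,y) < \tau(y)$); and (ii) when $\Sigma \cap M(\tau)^\inter = \emptyset$, the set in (i) is dense in $M(\tau)^\inter$, because for a.e. $x$ one can select a minimizing geodesic to some $y$ with $d(x,y)<\tau(y)$ that remains in $M(\tau)^\inter$ and hence avoids $\Sigma$. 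Then $M(\tau)^\inter \setminus \Sigma \subset \overline{M_\Sigma(\tau)}$, the volumes agree, and the contrapositive is proved. The topological bookkeeping in step (ii) — ensuring the chosen geodesic stays in $M(\tau)$ — is the delicate part; everything else is the measure-theoretic observation that $V$ charges open sets positively and does not charge the smooth hypersurface $\p\Sigma$.
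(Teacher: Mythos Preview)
Your forward direction matches the paper's. For the converse the paper proceeds directly rather than by contrapositive: using that $\p M(\tau)$ is a null set, pick $x \in M(\tau)^\inter \setminus M_\Sigma(\tau)$; any path $\gamma$ from some $y \in \p M$ to $x$ of length $\le \tau(y)$ must meet $\Sigma$ (otherwise $x \in M_\Sigma(\tau)$), the intersection point $z$ lies in $M(\tau)^\inter$, and a small neighborhood of $z$ then meets $\Sigma^\inter$. This is short and sidesteps everything you flag as delicate.

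Your contrapositive route is the logical dual of this and can be completed, but two points in the write-up need correcting. First, the concern that a minimizing geodesic from $y$ to $x$ ``could leave $M(\tau)$'' is unfounded: every intermediate point $z$ on such a path satisfies $d(y,z) \le d(y,x) \le \tau(y)$, so $z \in M(\tau)$ automatically, and in fact $d(y,z) < \tau(y)$ whenever $z \ne x$, so $z \in M(\tau)^\inter$. Your step (ii) is therefore not delicate at all once phrased this way. Second, the assertion $M(\tau)^\inter = \{x : \exists y,\ d(x,y) < \tau(y)\}$ is false in general (with $\Gamma = \p M$ and $\tau$ a constant large enough that $M(\tau) = M$, a point at maximal distance from $\p M$ lies in $M(\tau)^\inter = M^\inter$ yet never achieves strict inequality). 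What actually closes the argument is the elementary observation you use implicitly but do not justify: $\Sigma^\inter \cap M(\tau)^\inter = \emptyset$ already forces $\Sigma \cap M(\tau)^\inter = \emptyset$, since $\Sigma = \overline{\Sigma^\inter}$ and $M(\tau)^\inter$ is open. With that in hand, for any $x \in M(\tau)^\inter$ the minimizing path from the witnessing $y$ to $x$ lies in $M(\tau)^\inter \cup \{y\}$ and therefore avoids $\Sigma$, giving $x \in M_\Sigma(\tau)$; together with the null-boundary fact this yields $V(M(\tau)) = V(M_\Sigma(\tau))$.
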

\begin{proof}
Notice that $d_\Sigma(x, y) \ge d(x, y)$ for any $x, y \in M \setminus \Sigma$.
Hence $M_\Sigma(\tau) \subset M(\tau)$.
Morever, $M_\Sigma(\tau) \cap \Sigma = \emptyset$ by definition. 
In particular, if the open set $M(\tau)^\inter \cap \Sigma^\inter$ is nonempty, then 
%$V(M(\tau)^\inter \cap \Sigma^\inter) > 0$, whence
\begin{align*}
V(M_\Sigma(\tau)) &\le V(M(\tau) \setminus \Sigma) 
\\&< V(M(\tau) \setminus \Sigma) + V(M(\tau)^\inter \cap \Sigma^\inter)
\le V(M(\tau)).
\end{align*}
Thus we have shown the implication from left to right in (\ref{eq_detection}).

Let us now suppose that $V(M_\Sigma(\tau)) < V(M(\tau))$.
Then $M(\tau) \setminus M_\Sigma(\tau)$ is not a null set (that is, a set of measure 0).
But $\p M(\tau)$ is a null set \cite{Oksanen2011}, whence 
there is $x \in M(\tau)^\inter \setminus M_\Sigma(\tau)$.
Thus there is $y \in \p M$ and a path $\gamma : [0, \ell] \to M$
from $y$ to $x$ such that the length of $\gamma$ satisfies $l(\gamma) \le \tau(y)$.
The path $\gamma$ intersects $\Sigma$ since otherwise we would have $x \in M_\Sigma(\tau)$.
Let $z \in \Sigma \cap \gamma([0, \ell])$.
Then $z \in M(\tau)^\inter$ since $x \in M(\tau)^\inter$, and
there is a neighborhood $U \subset M(\tau)^\inter$ of $z$ such that $U \cap \Sigma^\inter \ne \emptyset$.
Hence also $M(\tau)^\inter \cap \Sigma^\inter \ne \emptyset$.
\end{proof}

Theorem \ref{thm_main} follows from the formula (\ref{eq_reconstruction_vol}) 
and Lemma \ref{lem_obstacle_from_volumes}.

\section{Numerical results}
\label{sec_computations}
\def\SNR{\text{SNR}}

\subsection{Simulation of the data}

In all our numerical examples $(M, g)$ is the two-dimensional 
unit square with the Euclidean metric, that is,
\begin{align*}
M = [0,1]^2, 
\quad g = (dx^1)^2 + (dx^2)^2.
\quad 
\end{align*}
%that is,
%\begin{align*}
%M_1 = [0,1]^2
%\quad \text{or} \quad
%M_2 = [-1, 2] \times [0, 1].
%\end{align*}
%Our region of interest is $M_1$,
%and we consider $M_2$ only when we want to avoid some effects related to 
%reflections from the boundary $\p M$.
%Moreover, $g$ is the Euclidean metric, 
%\begin{align*}
%g = (dx^1)^2 + (dx^2)^2, 
%\end{align*}
Moreover, $T = 1$ and the accessible part of the boundary $\Gamma$ is the bottom edge of $M$,
\begin{align*}
\Gamma = \{(x^1, 0) \in M; x^1 \in (0,1) \}.
\end{align*}

For computation of the Dirichlet-to-Neumann map we discretize in space by 
using finite elements, and solve the resulting system of ordinary differential equations by 
a backward differentiation formula (BDF).
To be very specific, we use the commercial Comsol solver
with quadratic Lagrange elements and BDF time-stepping with maximum order of 2.
Both the maximum element size and time step size are set to the constant value 
$h = 0.0025$.

We discretize the 
measurement $\Lambda_{\Gamma, 2T} f$, $f \in L^2((0, T) \times \Gamma)$,
by taking the point values on the uniform grid of 
temporal points $t_j \in [0, 2T]$, $j=1,2,\dots,N_t$,
and spatial points $x_k \in \Gamma$, $k=1,2,\dots,N_x$,
where $N_x = 20$ and $N_t = 800$.
The higher precision in time reflects the fact that 
a measurement of this type can realized by using $N_x$
receivers (e.g. microphones) with the sampling rate $h$.

We model noisy measurements by adding white Gaussian noise 
to the signal
\begin{align*}
\lambda_f(j, k) := \Lambda_{\Gamma, T} f(t_j, x_k), 
\quad j = 1,2, \dots, N_t,\ k=1,2,\dots, N_x.
\end{align*}
To be very specific, we use the Matlab function \verb+awgn+
both to measure the power of the signal $\lambda_f$
and to add noise with specified signal-to-noise ratio ($\SNR$).
We have used signal-to-noise ratios  $14 dB$
and $7 dB$ corresponding to $4\%$ and $20\%$ noise power levels.

\subsection{Solving the control equation}
\def\spC{\mathcal C}

The operator $K_\tau$ is self-adjoint and positive-semidefinite 
by (\ref{from_boundary_to_interior}),
whence $K_\tau + \alpha$ positive-definite for $\alpha > 0$.
We solve the Tikhonov regularized control equation 
\begin{align}
\label{eq_control_K}
(K_\tau + \alpha)f = b
\end{align}
by using the conjugate gradient (CG) method on 
a finite dimensional subspace $\spC_\tau \subset L^2(S_\tau)$
that we will define below. We have used the initial value $f = 0$
in all our CG iterations. 

We denote by $\Gamma_k \subset \Gamma$ the Voronoi cell corresponding 
to the measurement point $x_k$, $k=1,2,\dots, N_x$, that is, 
\begin{align*}
\Gamma_k := \{x \in \Gamma;\ |x - x_k| \le |x - x_l|,\ l = 1,2, \dots, N_x \}.
\end{align*}
Moreover, we denote by $\spC$ the space of piecewise constant sources $f$
that can be represented as a linear combination of the functions
\begin{align}
\label{def_pulse}
f_k(t, x) := 1_{[0, h]}(t) 1_{\Gamma_k}(x), \quad k = 1,2, \dots, N_x,
\end{align}
and their time translations by an integer multiple of $h$.
Finally, we define
\begin{align*}
\spC_\tau := \{ f \in \spC;\ \supp(f) \subset S_\tau \},
\quad \tau \in C_T(\Gamma).
\end{align*}

As the wave equation (\ref{eq_wave}) is invariant with respect to 
translations in time, we can compute $\lambda_f$ for 
arbitrary $f \in \spC_\tau$ and $\tau \in C_T(\Gamma)$ 
if we are given the measurements 
\begin{align*}
\lambda_{f_k}, \quad k = 1,2, \dots, N_x.
\end{align*}
To summarize, we employ $N_x = 20$ measurements that can be 
realized by using $N_x$ receivers 
with the sampling rate $h = 0.0025$.

\subsection{Regularization and calibration}

\begin{figure}[t]
\centering
\includegraphics[scale=0.6]
{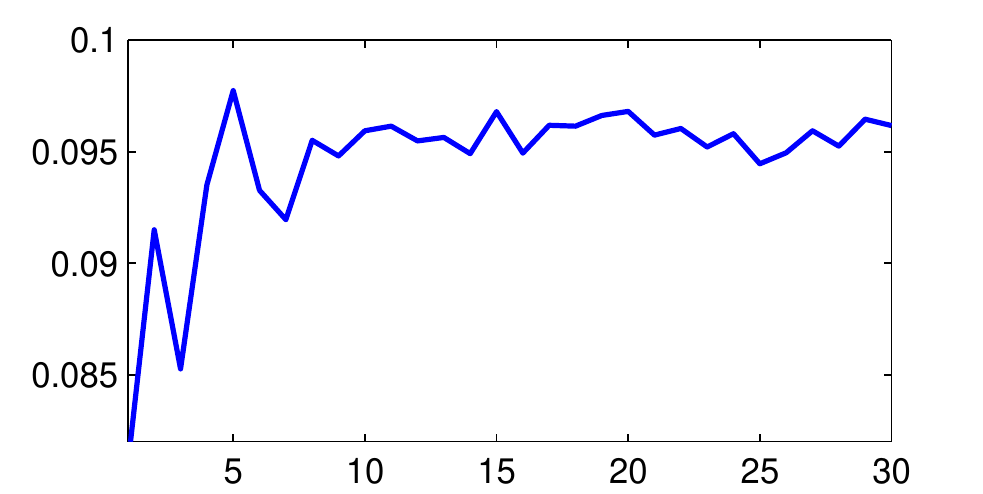}
\includegraphics[scale=0.6]
{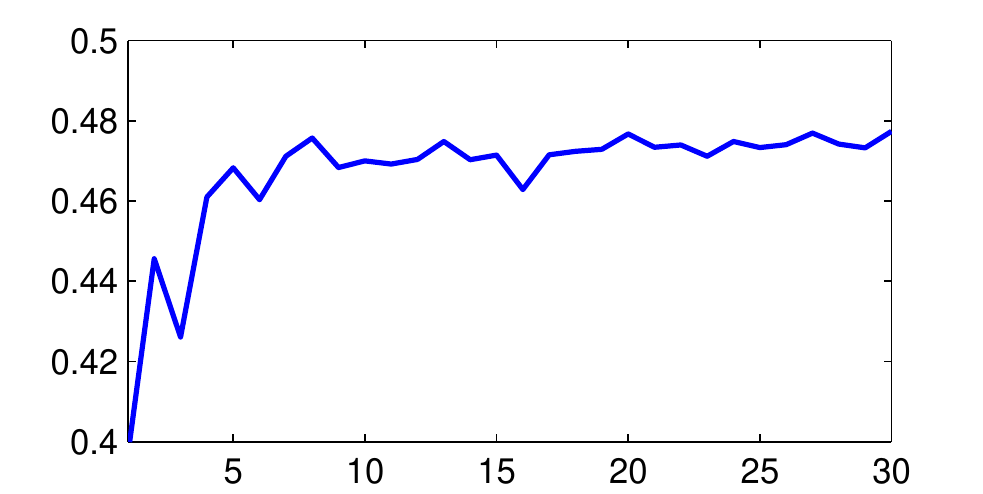}
\caption{
Convergence of the reconstructed volume $V(M_\emptyset(\tau_r))$
as a function of $N_{cg}$, i.e. the number of the conjugate gradient steps. 
Noiseless case.
{\em Left:} $r = 1/10$; {\em right:} $r = 1/2$.
}
\label{fig_convergence}
\end{figure}

As the control equation (\ref{eq_control_K}) may be ill-posed
for $\alpha = 0$, we terminate the CG iteration early after $N_{cg}$ steps. 
This amounts to regularization of the problem \cite{Hanke1995}.
To calibrate the method we probed the empty space case, $\Sigma = \emptyset$,
with half-spaces. That is, we chose the profile function $\tau \in C_T(\Gamma)$
to be of the form, 
\begin{align*}
\tau_r(x) := r, \quad x \in \Gamma,\quad r \in [1/10, 1/2].
\end{align*}
In this case, the CG iteration essentially converges after 10 steps
even when not using the Tikhonov regularization, that is, $\alpha = 0$,
see Figure \ref{fig_convergence}.
For this reason, we have chosen $N_{cg} = 10$ in all our further simulations.

In addition to the empty space case, 
we have experimented with the disk and the square shaped 
obstacles defined as follows:
$\Sigma_\circ$ is the disk with radius $3/10$ 
and center $p := (1/2, 1/2)$
and $\Sigma_\diamond$ is the square with side length $0.424$,
center $p$ and axes rotated by $\pi/4$ with respect to the axes of $M$,
see Figure \ref{fig_geom}.

\begin{figure}[t]
\centering
\includegraphics[scale=0.6]
{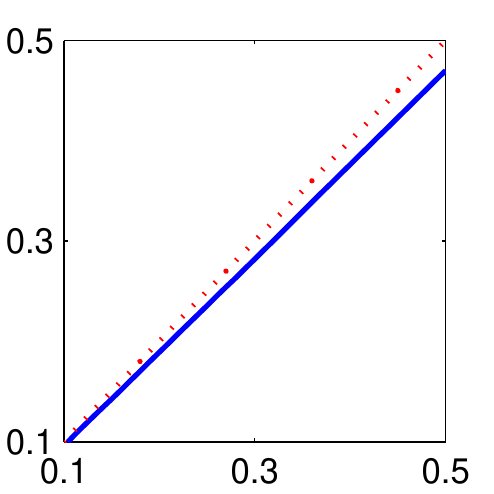}
\includegraphics[scale=0.6]
{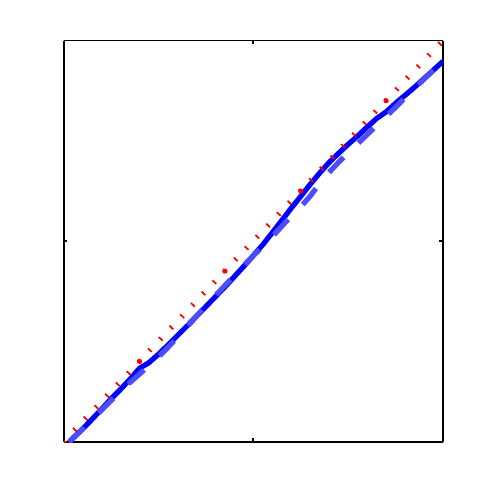}
\includegraphics[scale=0.6]
{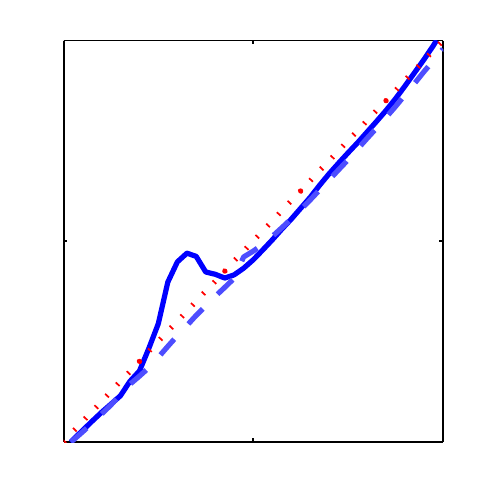}
\includegraphics[scale=0.6]
{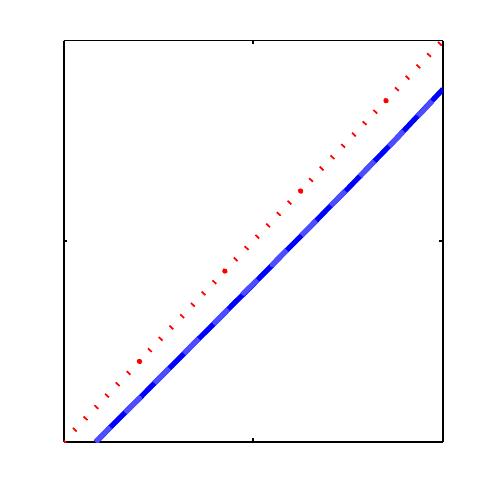}
\caption{
Reconstructed volumes $V(M_\emptyset(\tau_r))$  
as a function of $r$ compared to the real volume (dotted red).
The two reconstructions (solid blue and dashed blue) correspond to two
different realizations of noise.
%Probing with the halfspaces $H_r$ in the empty space case, $M = [0,1]^2$
%and $g$ Euclidean. Volumes $V(H_r)$, $0.1 \le r \le 0.5$,
%are computed by using the conjugate gradient method with $N_{cg}$ steps.
{\em From left, 1st:} 
noiseless, $\alpha = 0$;
{\em 2nd:} 
$\SNR = 14 dB$, $\alpha = 0$;
{\em 3rd:} 
$\SNR = 7 dB$, $\alpha = 0$;
{\em 4th:} 
$\SNR = 7 dB$, $\alpha = 10^{-3}$.
}
\label{fig_vols_empty}
\end{figure}

\begin{figure}[t]
\centering
\includegraphics[scale=0.6]
{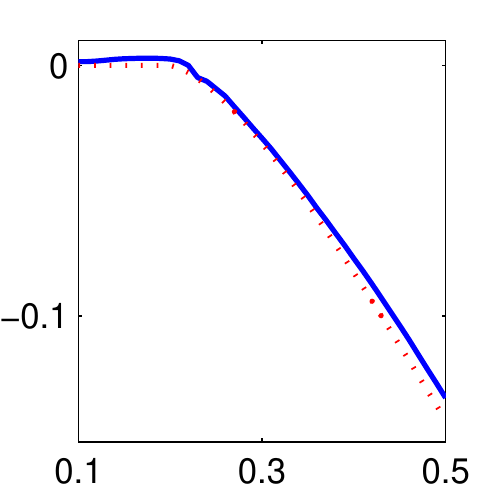}
\includegraphics[scale=0.6]
{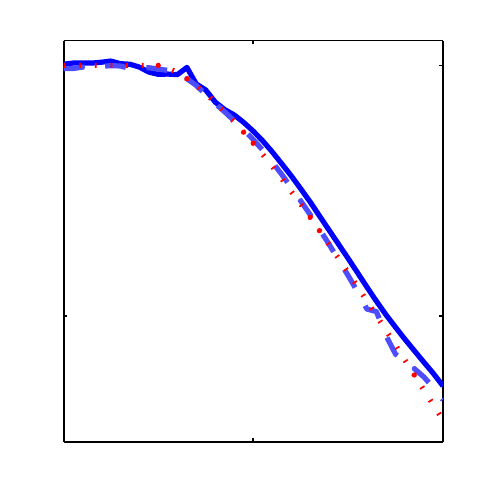}
\includegraphics[scale=0.6]
{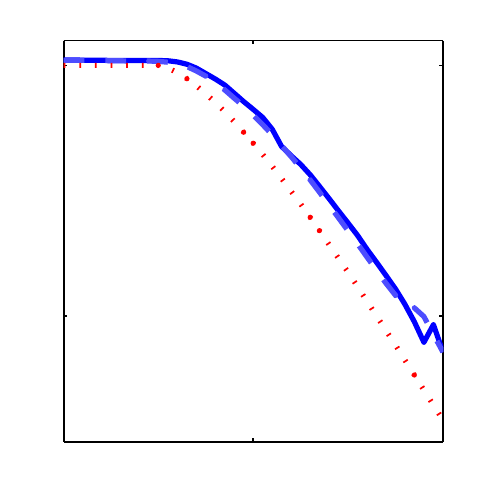}
\caption{
Reconstructed volume differences (\ref{vol_diffs}) with $\tau = \tau_r$
as a function of $r$
compared to the real difference (dotted red)
in the case of the disk shaped obstacle $\Sigma = \Sigma_\circ$.
The two reconstructions (solid blue and dashed blue) correspond to two
different realizations of noise.
{\em From left, 1st:} 
noiseless, $\alpha = 0$;
{\em 2nd:} 
$\SNR = 14 dB$, $\alpha = 0$;
{\em 3rd:} 
$\SNR = 7 dB$, $\alpha = 10^{-3}$.
}
\label{fig_vols_disk}
\end{figure}

It is not clear to us, why the method underestimates the volume $V(M_\emptyset(\tau_r))$,
see Figure \ref{fig_vols_empty} (leftmost plot).
%One possible reason is that the BDF method tends to damp high frequencies
%and we are using the sharp pulses (\ref{def_pulse}) [TODO ref].
One possibility is that we using too few spatial basis functions,
however, the smallness of $N_x$ is motivated by applications.
Moreover, the underestimation is systematic and is canceled
when considering the volume differences,
\begin{align}
\label{vol_diffs}
V(M_\Sigma(\tau)) - V(M_\emptyset(\tau)),
\end{align}
see Figure \ref{fig_vols_disk}.
In terms of applications, this means that we should calibrate the method
in a known background before probing a region that possibly contains an obstacle.

According to our experiments the method reconstructs volumes 
reliably when $\SNR=14 dB$ and we regularize only 
through the early termination of the CG iteration. %, that is, $N_{cg} = 10$. 
When $\SNR=7 dB$ and $\alpha = 0$, a reconstruction 
can be seriously disrupted even in the empty space case.
After introducing Tikhonov regularization with $\alpha = 10^{-3}$,
the effect of noise vanishes but a large systematic error 
appears, see Figure \ref{fig_vols_empty} (the two rightmost plots).
We see that considering the volume differences (\ref{vol_diffs})
becomes even more essential when $\alpha > 0$.

\subsection{Probing with disk shaped domains of influence}

\begin{figure}[t]
\centering
\includegraphics[scale=0.6]
{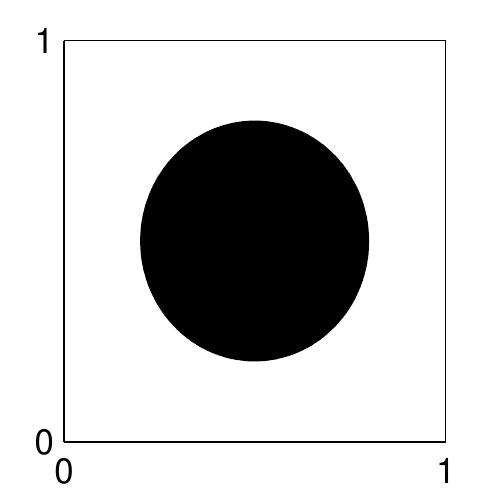}
\includegraphics[scale=0.6]
{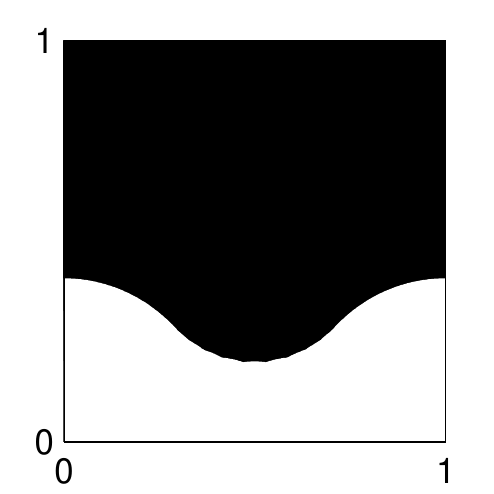}
\includegraphics[scale=0.6]
{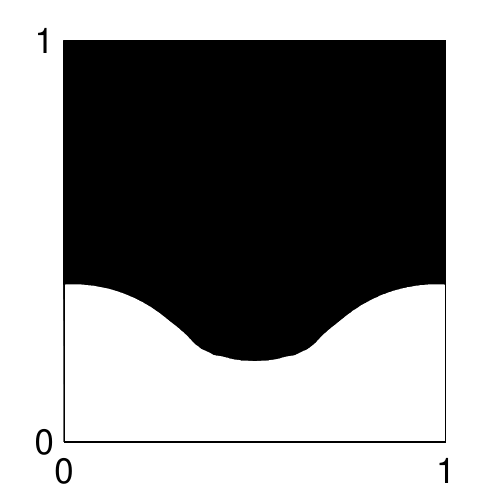}
\\
\includegraphics[scale=0.6]
{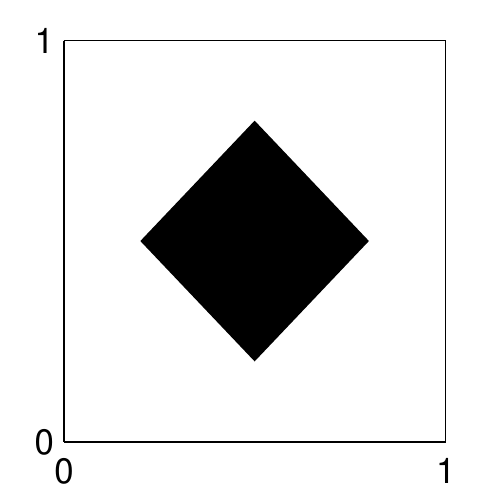}
\includegraphics[scale=0.6]
{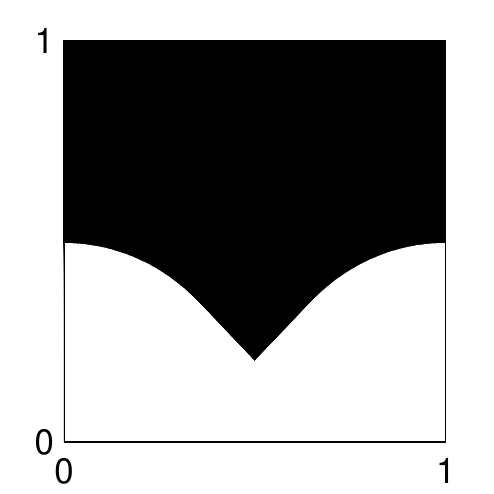}
\includegraphics[scale=0.6]
{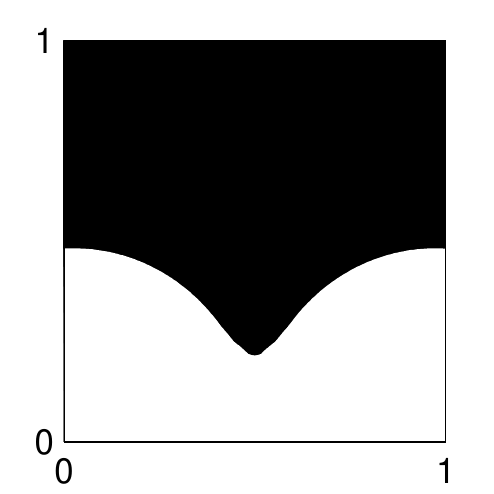}
\caption{
{\em Top row from left, 1st:} 
The disk shaped obstacle $\Sigma = \Sigma_\circ$.
{\em 2nd:} 
The largest region on which the absence of the obstacle 
can be concluded when probing with disk shaped 
domains of influence $H_{\Sigma_\circ}(\Gamma)$.
{\em 3rd:} 
A reconstruction of $H_{\Sigma_\circ}(\Gamma)$ in the noiseless case. 
The threshold $\epsilon = 5/10^{4}$.
{\em Bottom row:} 
The square shaped obstacle $\Sigma = \Sigma_\diamond$,
$H_{\Sigma_\diamond}(\Gamma)$
and a reconstruction of $H_{\Sigma_\diamond}(\Gamma)$.
The same parameter values are used for both the reconstructions.
}
\label{fig_geom}
\end{figure}

We will now describe our experiments concerning reconstruction of the shape of an obstacle. To this purpose, we chose the profile function $\tau \in C_T(\Gamma)$
to be of the form, 
\begin{align*}
\tau_r^y(x) := r - |x - y|, \quad x \in \Gamma,\quad y \in \Gamma,\ r \in [1/10, 1/2].
\end{align*}
Then $M(\tau_r^y) = \overline{B(y, r)} \cap M$, that is, the intersection of $M$ and
the closed disk of radius $r$ centered at $y$.
Probing with disks has been considered in the context of 
electrical impedance tomography in \cite{Ide2007}
and our numerical results are comparable to the results therein. 

Analogously to \cite{Ide2007} and \cite{Oksanen2011a},
let us define the largest region $H_\Sigma(\Gamma)$ on which
we can conclude the absence of obstacles by probing with the sets 
$\overline{B(y, r)} \cap M$, $y \in \Gamma$, $r \in (0, T]$.
We denote 
\begin{align*}
R_T(y) :&= \sup \{ r \in (0, T];\ B(y, r) \cap \Sigma^\inter = \emptyset \}
\\& = \sup \{ r \in (0, T];\ V(M_\Sigma(\tau_r^y)) = V(M(\tau_r^y))\},
\end{align*}
and define
\begin{align*}
H_\Sigma(\Gamma) := \bigcup_{y \in \Gamma} \ll(B(y, R_T(y)) \cap M \rr).
\end{align*}

\begin{figure}[t]
\centering
\includegraphics[scale=0.6]
{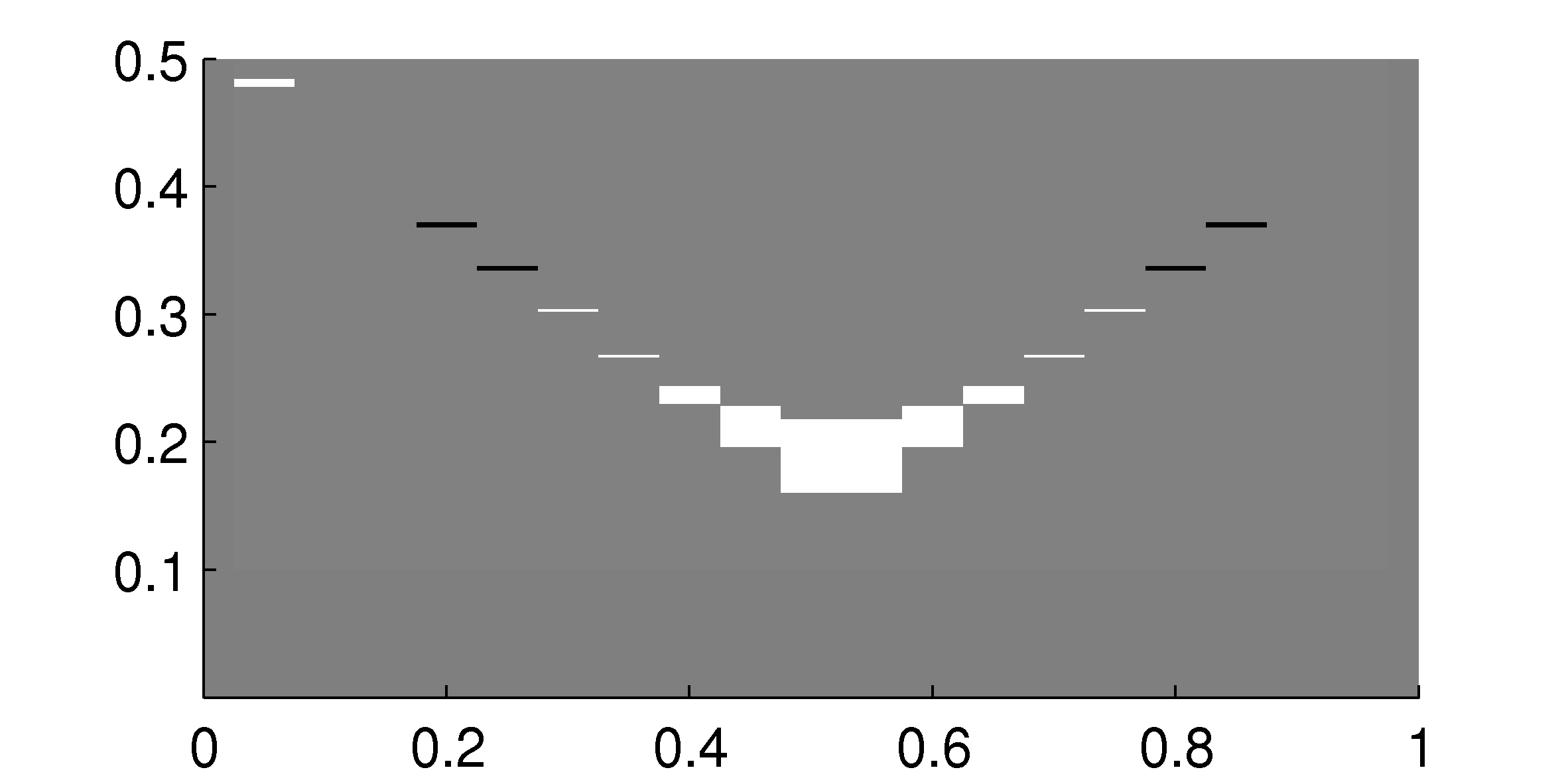}
\includegraphics[scale=0.6]
{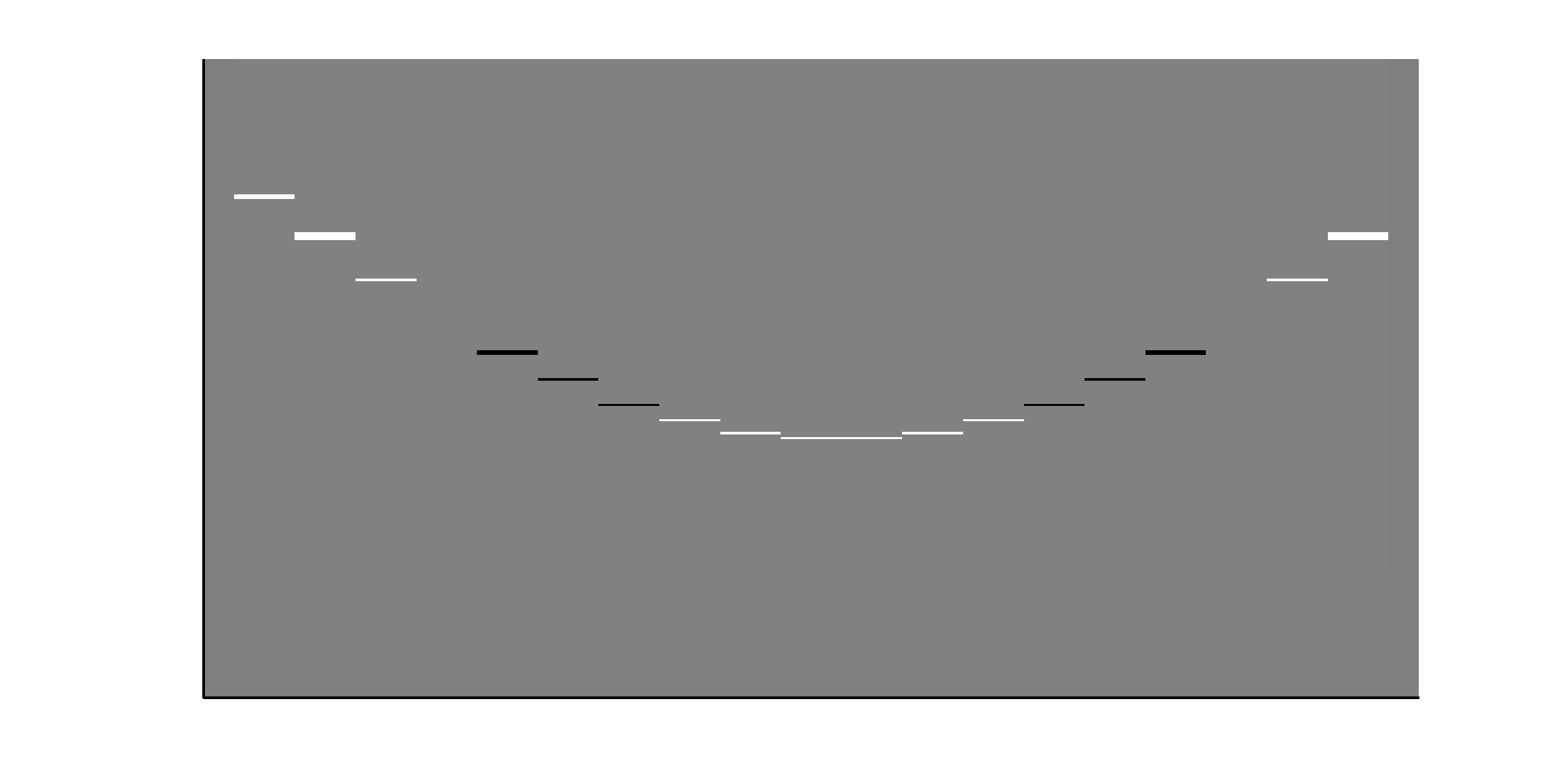}
\\
\includegraphics[scale=0.6]
{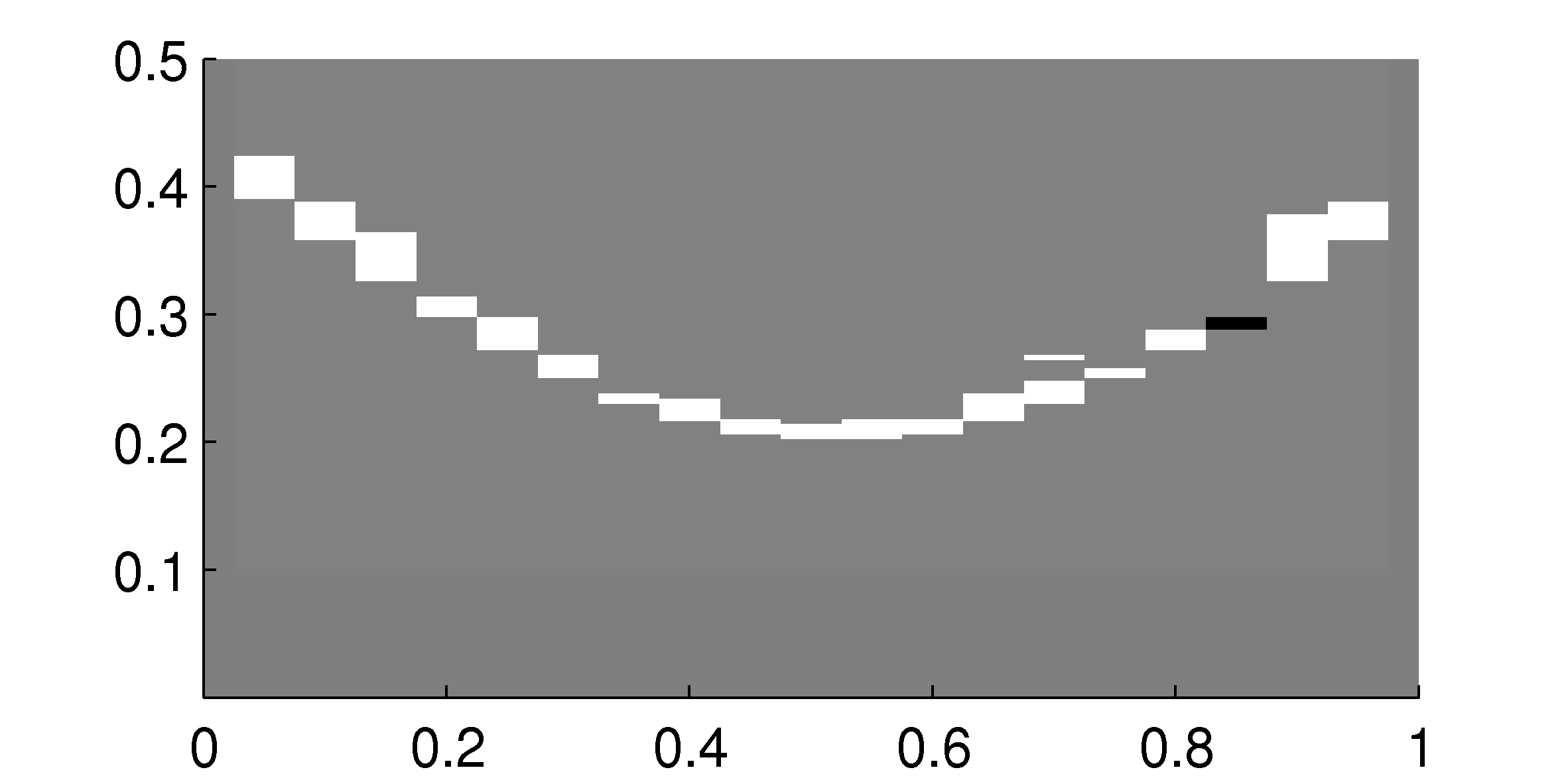}
\includegraphics[scale=0.6]
{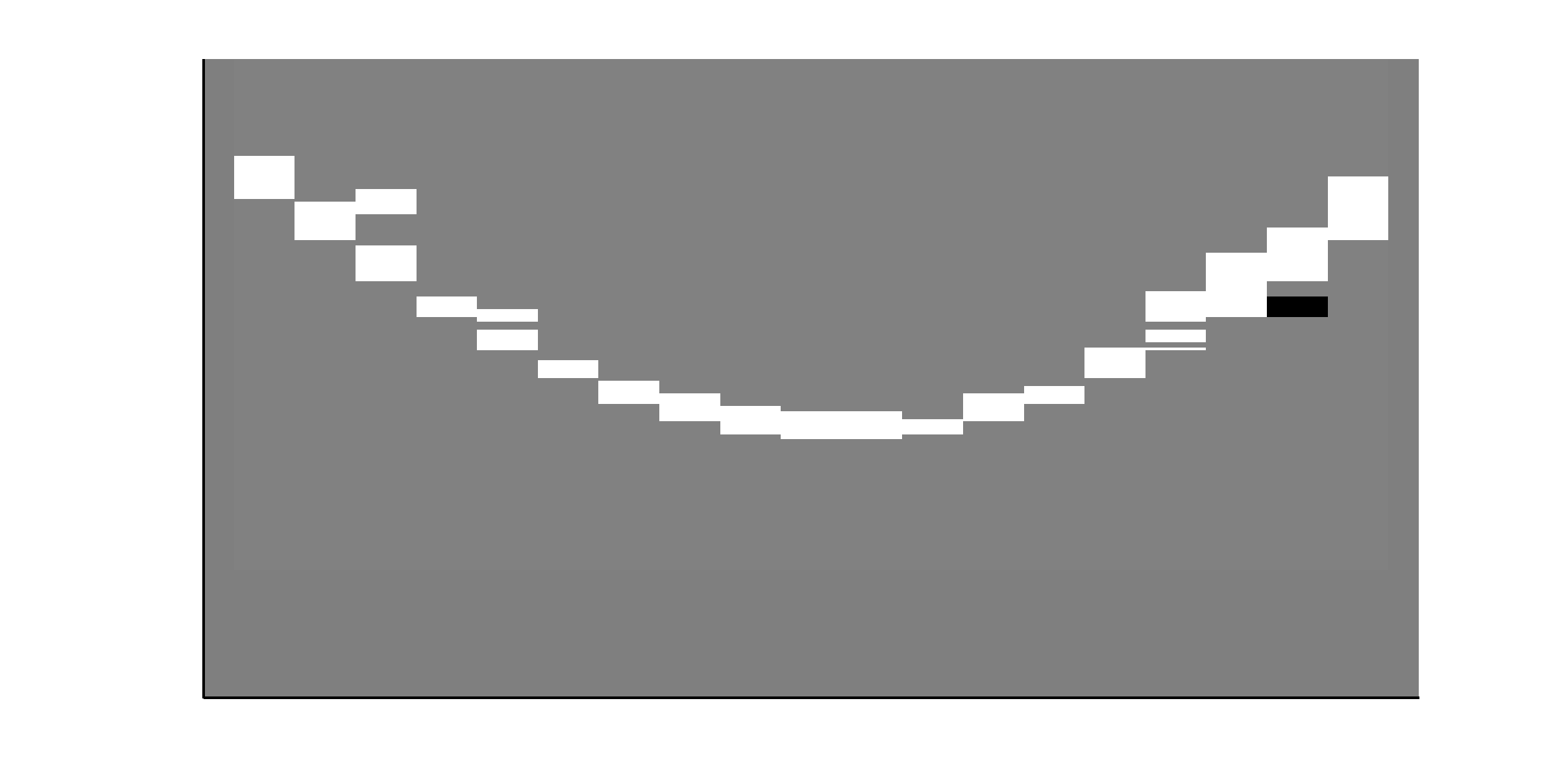}
\caption{
Comparison of a reconstruction $\tilde H_\Sigma(\Gamma)$ with the 
theoretical best possible reconstruction $H_\Sigma(\Gamma)$.
Erroneous pixels in $\tilde H_\Sigma(\Gamma) \setminus H_\Sigma(\Gamma)$
are drawn in white
and $H_\Sigma(\Gamma) \setminus \tilde H_\Sigma(\Gamma)$
in black.
Gray pixels are reconstructed correctly.
{\em Top row:} Noiseless measurements with the threshold $\epsilon = 5/10^{4}$,
{\em left:} $\Sigma = \Sigma_\diamond$;
{\em right:} $\Sigma = \Sigma_\circ$.
{\em Bottom row:} $\SNR=14dB$, $\Sigma = \Sigma_\circ$ and
$\epsilon = 4/10^{3}$.
The two reconstructions correspond to two
different realizations of noise.
}
\label{fig_rec_diff}
\end{figure}

Let us describe next how we approximate $R_T(y)$
when computing with finite precision. 
Let $\epsilon > 0$, $N_r \in \N$ 
and let $r_l \in [0, T]$, $l = 1, 2, \dots, N_r$, be a uniform grid of points.
We denote
\begin{align*}
L(\epsilon, N_r) := \max \{l = 1, 2, \dots, N_r;\ 
V(M_\Sigma(\tau_r^y)) - V(M_\emptyset(\tau_r^y)) \ge -\epsilon \},
\end{align*}
and define the approximation $r_T(y; \epsilon, N_r) = r_{L(\epsilon, N_r)}$
of $R_T(y)$.
We have used the threshold $\epsilon = 5/10^4$ in noiseless cases
and $\epsilon = 4/10^3$ when $\SNR = 14 dB$.
According to our numerical experiments the method reconstructs 
$H_\Sigma(\Gamma)$ reliably when using these values of $\epsilon$
and $N_r = 500$, see Figure \ref{fig_rec_diff}, where
a white pixel means that the center point of the pixel is 
erroneously identified to be in $H_\Sigma(\Gamma)$ (false positive)
and a black pixel means erroneously identification of not being in $H_\Sigma(\Gamma)$
(false negative).

Computationally the shape reconstruction amounts to solving 
a large number of independent systems of linear equations
by running a few number of CG steps for each of them.
Our implementation with parameters as above and $r_l$'s restricted in $[1/10, 1/2]$ 
led to 4020 systems with the number of unknowns varying between 
30 and 1000. The run time for the full reconstruction on a single processor was about 10 minutes, however, as the systems are independent, the method allows for an efficient parallel implementation. 

\section*{Appendix: Approximate controllability}

In this section we show that the inclusion (\ref{density}) is dense, that is
we prove the following lemma.

\begin{lemma}
\label{lem_uniq_cont}
Let $T > 0$, let $\Gamma \subset \p M$ be open
and let $\tau \in C_T(\Gamma)$. 
Then 
\begin{align}
\label{the_dense_set}
\{ u^f(T);\ f \in C_0^\infty(S_\tau) \}
\end{align}
is dense in $L^2(M_\Sigma(\tau))$.
\end{lemma}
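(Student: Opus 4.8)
The plan is to prove the density statement \eqref{the_dense_set} by a duality argument combined with Tataru's unique continuation theorem, which is the standard route in the boundary control method. First I would suppose that $\psi \in L^2(M_\Sigma(\tau))$ is orthogonal to $u^f(T)$ for every $f \in C_0^\infty(S_\tau)$; extending $\psi$ by zero, we may regard it as an element of $L^2(M \setminus \Sigma)$ supported in $M_\Sigma(\tau)$. The goal is to show $\psi = 0$. To exploit the orthogonality I would introduce the adjoint wave: let $\phi$ solve the backward problem
\begin{align*}
&\p_t^2 \phi - \Delta_{g,\mu} \phi = 0, && \text{in } (0,T) \times (M \setminus \Sigma),\\
&\p_{\nu,\mu} \phi = 0, && \text{on } (0,T) \times (\p M \cup \p \Sigma),\\
&\phi|_{t=T} = 0, \quad \p_t \phi|_{t=T} = \psi, && \text{in } M \setminus \Sigma.
\end{align*}
A standard integration by parts in the cylinder $(0,T)\times(M\setminus\Sigma)$, using the homogeneous Neumann condition on $\p\Sigma$ for both $u^f$ and $\phi$ and the vanishing Cauchy data of $u^f$ at $t=0$ and of $\phi$ at $t=T$, turns the orthogonality relation $(u^f(T),\psi) = 0$ into $\int_0^T \int_{\p M} f \, \phi \, dS_g \, dt = 0$ for all $f \in C_0^\infty(S_\tau)$. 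Hence $\phi$ vanishes on $S_\tau$, i.e. $\phi(t,x) = 0$ for $x \in \bar\Gamma$ and $t \in [T - \tau(x), T]$.

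Next I would upgrade this boundary vanishing to interior vanishing via unique continuation. Since $\p_{\nu,\mu}\phi = 0$ on $(0,T)\times\Gamma$ as well and $\phi = 0$ there, Tataru's theorem \cite{Tataru1995} (applied, after the usual even/odd reflection in time so that Cauchy data are prescribed on a two-sided cone, or directly in its half-space formulation) gives that $\phi$ vanishes in the double cone of influence determined by the set where the Cauchy data vanish. Concretely, using the optimal domain-of-dependence version of Tataru's result one obtains $\phi(T, x) = 0$ for every $x$ with $d_\Sigma(x, y) \le \tau(y)$ for some $y \in \Gamma$, that is, on all of $M_\Sigma(\tau)$ — here it is essential that $d_\Sigma$ (the distance in $M\setminus\Sigma$) appears, since $\phi$ solves the wave equation only on $M \setminus \Sigma$ and the obstacle obstructs propagation. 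Differentiating the energy identity, or using that $\phi \equiv 0$ on a full time-neighborhood of $t = T$ in $M_\Sigma(\tau)$, yields $\p_t\phi(T,\cdot) = \psi = 0$ on $M_\Sigma(\tau)$, which is what we wanted.

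The main obstacle is the careful bookkeeping in the unique continuation step: one must match the geometry of the set $S_\tau$ on the boundary with the sharp interior domain of influence $M_\Sigma(\tau)$, accounting for the variable profile $\tau$ and for the fact that the relevant distance is the one in the slit manifold $M \setminus \Sigma$ rather than in $M$. This requires either a foliation/Holmgren-type propagation argument iterating Tataru's local statement along the boundary, or a direct appeal to the known sharp-domain formulation (as in \cite{Katchalov2001}). A secondary technical point is justifying the integration by parts and the regularity of $\phi$ for merely $L^2$ Cauchy data $\psi$; this is handled by the standard hidden-regularity estimates for the Neumann problem \cite{Lasiecka1991} together with a density argument, approximating $\psi$ by smooth data and passing to the limit.
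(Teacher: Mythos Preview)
Your proof is correct and follows the standard duality route of the boundary control method, but it is \emph{not} the approach the paper takes. The paper explicitly avoids running the unique continuation argument directly for a variable profile $\tau$: instead it proves an auxiliary lemma (Lemma~\ref{lem_uniq_cont_induction}) showing that density is stable under finite maxima, then approximates $\tau$ from below by simple functions $\sum_j T_j 1_{\Gamma_j}$ and invokes the already-known constant-$\tau$ case from \cite[Th.~3.10]{Katchalov2001} on each piece $\Gamma_j$, passing to the limit via $V(M(\tau_\epsilon)) \to V(M(\tau))$.

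The trade-off is exactly the one you identify as ``the main obstacle'': your argument requires matching the variable set $S_\tau$ on the boundary with the sharp domain $M_\Sigma(\tau)$ inside, which means either iterating Tataru's local statement along $\Gamma$ or citing a sharp-domain formulation that already absorbs that work. The paper sidesteps this by pushing the variable-$\tau$ bookkeeping into an elementary measure-theoretic approximation and reusing the constant case as a black box. Your route is more direct and self-contained once the sharp unique continuation is granted; the paper's route is cleaner if one prefers to cite rather than reprove the analytic core.
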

A density result of this type is called approximate controllability
in the control theoretic literature.
To our knowledge, Lemma \ref{lem_uniq_cont} is proved previously only 
in the case of a constant function $\tau$, see e.g. \cite[Th. 3.10]{Katchalov2001}.
We will give a proof in the general case $\tau \in C_T(\Gamma)$ 
by reducing it to the constant function case.
To simplify the notation we consider only the case $\Sigma = \emptyset$,
since the general case follows by replacing $M$ by $M \setminus \Sigma$
in the proofs below. 

\begin{lemma}
\label{lem_uniq_cont_induction}
Let $T > 0$, $J \in \N$, let $\Gamma_j \subset \p M$ be open 
and let $h_j \in C_T(\Gamma_j)$ for $j = 1, 2, \dots, J$.
We define
\begin{align}
\label{max_h}
h^J(y) := 
\begin{cases}
\max\{ h_j(y); \text{$j$ satisfies $\bar \Gamma_j \ni y$} \},
& y \in \bigcup_{j=1}^J \bar \Gamma_j,
\\
0, &\text{otherwise}.
\end{cases}
\end{align}
%Then $M(h^J) = \bigcup_{j=1}^J M(h_j)$.
If for all $j=1, \dots, J$ 
the functions (\ref{the_dense_set}) for $\tau = h_j$ are dense 
in $L^2(M(h_j))$, 
then the functions (\ref{the_dense_set}) for $\tau = h^J$
are dense in $L^2(M(h^J))$. 
\end{lemma}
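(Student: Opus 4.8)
The plan is to prove the statement by induction on $J$, with the base case $J=1$ being trivial. For the inductive step, I would first reduce to the case $J = 2$: if the claim holds for pairs, then writing $h^J = \max(h^{J-1}, h_J)$ and noting that $h^{J-1}$ is itself a pointwise maximum of functions for which approximate controllability holds (by the inductive hypothesis applied to $J-1$), one gluing step finishes the argument. So the real content is: given two open sets $\Gamma_1, \Gamma_2 \subset \p M$ and $h_j \in C_T(\Gamma_j)$ with approximate controllability for each $h_j$, deduce it for $h := \max(h_1, h_2) \in C_T(\Gamma_1 \cup \Gamma_2)$.

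For the two-set case I would argue by duality, as is standard for approximate controllability: the set $\{u^f(T) : f \in C_0^\infty(S_h)\}$ is dense in $L^2(M(h))$ if and only if the only $\psi \in L^2(M(h))$ orthogonal to all $u^f(T)$ is $\psi = 0$. Via the Blagoveshchenskii-type identity / the adjoint wave equation (the map $f \mapsto u^f(T)$ has an adjoint expressible through a backward wave problem), orthogonality of $\psi$ to the controlled states forces the solution $w$ of a backward wave equation with source $\psi$ to have vanishing Neumann trace on $S_h$. The key geometric observation is that $S_h = S_{h_1} \cup S_{h_2}$ as subsets of $[0,T] \times \p M$, because $\{(t,x) : t \ge T - h(x)\} = \{t \ge T - h_1(x)\} \cup \{t \ge T - h_2(x)\}$ pointwise. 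Hence vanishing of the Neumann data on $S_h$ is equivalent to vanishing on both $S_{h_1}$ and $S_{h_2}$ simultaneously. Applying the already-known approximate controllability for $h_1$ gives that the corresponding part of $\psi$ — more precisely $\psi$ restricted to $M(h_1)$, using that $\psi$ is orthogonal to all $u^f(T)$ with $f \in C_0^\infty(S_{h_1})$ — must vanish; likewise for $h_2$. Since $M(h) = M(h_1) \cup M(h_2)$ (again a direct consequence of the pointwise max), this forces $\psi = 0$ on all of $M(h)$.

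The main obstacle I anticipate is making the duality argument clean enough that the restriction-and-glue step is rigorous: one must be careful that "orthogonal to $u^f(T)$ for all $f$ supported in $S_{h_1}$" really does let one invoke the density hypothesis for $h_1$ to conclude $\psi|_{M(h_1)} = 0$, rather than just something about a projection. The cleanest route is probably not to restrict $\psi$ but to use Tataru's unique continuation directly: the backward solution $w$ with source $\psi \in L^2(M(h))$ has zero Cauchy data on $S_{h_j}$ for each $j$, and by the same unique continuation / domain-of-dependence reasoning that underlies the constant-$\tau$ case, $w$ vanishes on the "double cone of influence" of $S_{h_j}$, which contains $M(h_j)$; taking the union over $j$ kills $w$, hence $\psi$, on $M(h) = M(h_1) \cup M(h_2)$. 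A secondary technical point is verifying the set identities $S_h = \bigcup_j S_{h_j}$ and $M(h) = \bigcup_j M(h_j)$ — these are elementary but should be checked, the latter because $M(h)$ is defined via the distance function $d(x,y) \le h(y)$ and $h(y) = \max_j h_j(y)$ only over those $j$ with $y \in \bar\Gamma_j$, so one needs the convention in \eqref{max_h} to match up correctly with the union of the individual domains of influence.
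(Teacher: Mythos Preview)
Your duality argument is correct, and the worry you flag about the restriction step dissolves once you use finite speed of propagation: for $f$ supported in $S_{h_j}$ the wave $u^f(T)$ is supported in $M(h_j)$, so $(\psi,u^f(T))_{L^2(M)} = (\psi|_{M(h_j)},u^f(T))_{L^2(M(h_j))}$, and the density hypothesis for $h_j$ immediately forces $\psi|_{M(h_j)}=0$. Your fallback of invoking Tataru's unique continuation on the backward solution is unnecessary and in fact sidesteps the point of the lemma, which is to \emph{combine} the assumed densities for the $h_j$, not to rederive them from scratch.

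The paper takes a different and somewhat more elementary route: instead of passing to orthogonal complements, it builds approximating controls directly by linearity. Given $\psi\in L^2(M(h^{J+1}))$, the induction hypothesis produces $f_k^0$ supported in $S_{h^J}$ with $u^{f_k^0}(T)\to 1_{M(h^J)}\psi$, and the hypothesis for $h_{J+1}$ produces $f_k^1$ supported in $S_{h_{J+1}}$ with $u^{f_k^1}(T)\to 1_{M(h_{J+1})}(\psi-1_{M(h^J)}\psi)$; adding and using linearity of $f\mapsto u^f(T)$ gives $u^{f_k^0+f_k^1}(T)\to\psi$. This avoids duality, the backward problem, and any explicit appeal to where $u^f(T)$ is supported. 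Your approach has the merit of being the standard functional-analytic template for approximate controllability and makes the role of the set identities $S_{h^J}=\bigcup_j S_{h_j}$ and $M(h^J)=\bigcup_j M(h_j)$ very transparent; the paper's approach is shorter and purely constructive.
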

\begin{proof}
Notice that $\p M \subset M(\tau)$ if $\tau(y) \ge 0$
for all $y \in \p M$.
Abusing the notation slightly, we will consider $M(\tau)$
as a subset of $M^\inter$. 
This does not affect the density since $\p M$ is a null set. 
We denote $\Gamma^J := \bigcup_{j=1}^J \Gamma_j$
and have
\begin{align*}
M(h^J) 
&= 
\{x \in M^\inter;\ \text{there is $y \in \bar \Gamma^J$ s.t. $d(x, y) \le h(y)$}\}
\\&= 
\bigcup_{j=1}^J \{x \in M^\inter;\ \text{there is $y \in \bar \Gamma_j$ s.t. $d(x, y) \le h_j(y)$}\}
\\&=
\bigcup_{j=1}^J  M(\Gamma_1, h_j).
\end{align*}

We will now prove the density by induction with respect to $J$.
The case $J = 1$ is trivial.
Let us denote $M_0 := M(h^J)$ and $M_1 := M(h_{J+1})$.
%Notice that 
%\begin{align*}
%M(\Gamma^{J+1}, h^{J+1}) = M_0 \cup M_1.
%\end{align*}
Let $\psi \in L^2(M_0 \cup M_1)$.
By induction hypothesis there is a sequence of smooth functions 
$(f_k^0)_{k=1}^\infty$ supported in $S_{h^J}$ 
such that 
\begin{align*}
u^{f_k^0}(T) \to  1_{M_0} \psi.
\end{align*}
Moreover, there is a sequence of smooth functions 
$(f_k^1)_{k=1}^\infty$ supported in $S_{h_{J+1}}$ such that
\begin{align*}
u^{f_k^1}(T) \to  1_{M_1} (\psi - 1_{M_0} \psi).
\end{align*}
Thus
\begin{align*}
u^{f_k^0+f_k^1}(T) &\to  1_{M_0} (1 - 1_{M_1}) \psi + 1_{M_1} \psi 
= (1_{M_0 \setminus M_1} + 1_{M_1}) \psi 
\\&= 1_{M_0 \cup M_1} \psi = \psi.
\end{align*}
Moreover, $f_k^0+f_k^1$ is supported in 
$S_{h^{J}} \cup S_{h_{J+1}} \subset S_{h^{J+1}}$.
\end{proof}

\begin{proof}[Proof of Lemma \ref{lem_uniq_cont}]
Let $\psi \in L^2(M(\tau))$ and $\epsilon > 0$. 
There is a simple function 
\begin{align*}
h_\epsilon(y) = \sum_{j=1}^J T_j 1_{\Gamma_j}(y),
\end{align*}
where $J \in \N$, $T_j \in (0, T)$ and $\Gamma_j \subset \Gamma$ are open and disjoint, 
such that $\tau < h_\epsilon + \epsilon$ almost everywhere on $\Gamma$ and 
$h_\epsilon < \tau$ on $\bar \Gamma$, see e.g. \cite[proof of Lem. 4.2]{Oksanen2011}.
%\note{Disjointness is implied by the proof, since $\p \Gamma_j \cap \Gamma$ are null sets}
We denote $h_j := T_j 1_{\bar \Gamma_j}$ and define $\tau_\epsilon = h^J$ as the maximum (\ref{max_h}).
By the construction $\tau_\epsilon < \tau$
and $\tau_\epsilon \ge h_\epsilon$.

The functions (\ref{the_dense_set}) for $\tau = h_j$ are dense in $L^2(M(h_j))$ by \cite[proof of Th. 3.10]{Katchalov2001}.
Lemma \ref{lem_uniq_cont_induction} implies
that there is a smooth function 
$f$ supported in $S_{\tau_\epsilon} \subset S_\tau$ such that 
\begin{align*}
\norm{1_{M(\tau_\epsilon)} \psi - u^f(T)}_{L^2(M)}^2 < \epsilon.
\end{align*}
Thus
\begin{align}
\label{convergence_in_uniq_cont_lem}
\norm{ \psi - u^f(T)}_{L^2(M)}^2
&< \epsilon + \int_{M(\tau) \setminus M(\tau_\epsilon)} \psi^2 dV.
\end{align}
We have $V(M(\tau_\epsilon)) \to V(M(\tau))$ as $\epsilon \to 0$, see \cite{Oksanen2011}.
Thus the second term in (\ref{convergence_in_uniq_cont_lem}) tends to zero as $\epsilon \to 0$.
\end{proof}

\bigskip
{\em Acknowledgements.}
The research was supported by Finnish Centre of Excellence in Inverse
Problems Research, Academy of Finland project COE 250215,
and by European Research Council advanced grant 400803.

\bibliographystyle{abbrv} % apa abbrv
\bibliography{main}
%\bibliography{\latexpath/main}
\end{document}